\theoremstyle{plain}
\newtheorem{theorem}                 {Theorem}      [section]
\newtheorem{lemma}        [theorem]  {Lemma}
\newtheorem{proposition}  [theorem]  {Proposition}
\theoremstyle{definition}
\newtheorem{remark}       [theorem]  {Remark}
\numberwithin{equation}{section}
\numberwithin{table}{section}
\def \cn{{\mathbb C}}
\def \hn{{\mathbb H}}
\def \rn{{\mathbb R}}
\def \B{\mathcal B}
\def\nab#1#2{\hbox{$\nabla$\kern -.3em\lower 1.0 ex
    \hbox{$#1$}\kern -.1 em {$#2$}}}
\def \Re{\mathfrak R\mathfrak e}
\def \g{\mathfrak{g}}
\def \k{\mathfrak{k}}
\def \m{\mathfrak{m}}
\def \un{\mathfrak{u}}
\DeclareMathOperator{\Ad}{Ad}
\def \GLR#1{\text{\bf GL}_{#1}(\rn)}
\def \GLC#1{\text{\bf GL}_{#1}(\cn)}
\def \glc#1{\mathfrak{gl}_{#1}(\cn)}
\def \GLH#1{\text{\bf GL}_{#1}(\hn)}
\def \SLR#1{\text{\bf SL}_{#1}(\rn)}
\def \SL2{\widetilde{\text{\bf SL}}_{2}(\rn)}
\def \SLC#1{\text{\bf SL}_{#1}(\cn)}
\def \O#1{\text{\bf O}(#1)}
\def \SO#1{\text{\bf SO}(#1)}
\def \SOs#1{\text{\bf SO}^*(#1)}
\def \U#1{\text{\bf U}(#1)}
\def \SU#1{\text{\bf SU}(#1)}
\def \SUs#1{\text{\bf SU}^*(#1)}
\def \Sp#1{\text{\bf Sp}(#1)}
\def \sp#1{\mathfrak{sp}(#1)}
 \DeclareMathOperator{\grad}{grad}
\DeclareMathOperator{\trace}{trace}
\numberwithin{equation}{section}
\begin{document}
	

\title[$p$-harmonic functions on Riemannian symmetric spaces]{Explicit proper $p$-harmonic functions\\ on the Riemannian  symmetric spaces\\ $\SU n/\SO n$, $\Sp n/\U n$, $\SO {2n}/\U n$, $\SU{2n}/\Sp n$}

\author{Sigmundur Gudmundsson}
\address{Mathematics, Faculty of Science\\
University of Lund\\
Box 118, Lund 221 00\\
Sweden}
\email{Sigmundur.Gudmundsson@math.lu.se}

\author{Anna Siffert}
\address{Mathematisches Institut\\
	Einsteinstr. 62\\
	48149 M\" unster\\
	Germany}
\email{ASiffert@uni-muenster.de}

\author{Marko Sobak}
\address{Mathematisches Institut\\
	Einsteinstr. 62\\
	48149 M\" unster\\
	Germany}
\email{MSobak@uni-muenster.de}

\begin{abstract}
In this work we construct explicit complex-valued $p$-harmonic functions on the compact Riemannian symmetric spaces $\SU n/\SO n$, $\Sp n/\U n$, $\SO {2n}/\U n$, $\SU{2n}/\Sp n$.  We also describe how the same can be manufactured on their non-compact symmetric dual spaces.
\end{abstract}


\subjclass[2010]{31B30, 53C43, 58E20}

\keywords{$p$-harmonic functions, symmetric spaces, compact Lie groups}

\maketitle


\section{Introduction}\label{section-introduction}

The main objects for this study are the \textit{$p$-harmonic functions} $\phi: (M,g) \to \cn$ on Riemannian manifolds. These are solutions to the $p$-harmonic equation
\begin{equation*}
\tau^p(\phi) = 0,
\end{equation*}
where $\tau$ denotes the Laplace-Beltrami operator on $M$ and $\tau^p$ means $\tau$ applied $p$-times. 
Since each $p$-harmonic function is trivially $r$-harmonic for any $r \geq p$, we are interested the lowest such value.
\smallskip

The study of $p$-harmonic functions on Riemannian manifolds has invoked the interest of mathematicians and physicists for nearly two centuries. Applications within physics can for example be found in continuum mechanics, elasticity theory, as well as two-dimensional hydrodynamics problems involving Stokes flows of incompressible Newtonian fluids. 
\smallskip

Until just a few years ago, with only very few exceptions, the domains of all known explicit proper $p$-harmonic functions
have been either surfaces or open subsets of flat Euclidean space. 
The recent development has changed this situation and can be traced at the regularly updated online bibliography \cite{Gud-p-bib}, maintained by the first author. 
\smallskip

In their work \cite{Gud-Sob-1}, Gudmundsson and Sobak develop a general method for constructing proper $p$-harmonic functions using \textit{eigenfunctions} on the domain manifold $M$. These are  functions $\phi : (M,g) \to \cn$ such that there exist constants $\lambda,\mu \in \cn$, not both zero, with 
\begin{equation*}
\tau(\phi) = \lambda\cdot\phi\ \ \text{and}\ \  \kappa(\phi,\phi) = \mu\cdot\phi^2,
\end{equation*}
where $\kappa$ denotes the complex bilinear {\it conformality operator} defined by
\begin{equation*}
\kappa(\phi,\psi) = g(\grad \phi, \grad \psi).
\end{equation*}
More explicitly, they show that if $\phi$ is an eigenfunction on $M$, then the function
\begin{equation*}
\Phi_p:x\mapsto
\begin{cases}
	c_1\cdot\log(\phi(x))^{p-1}, 							& \text{if }\; \mu = 0, \; \lambda \not= 0\\[0.2cm]	c_1\cdot\log(\phi(x))^{2p-1}+ c_{2}\cdot\log(\phi(x))^{2p-2}, 								& \text{if }\; \mu \not= 0, \; \lambda = \mu\\[0.2cm]
	c_{1}\cdot\phi(x)^{1-\frac\lambda{\mu}}\log(\phi(x))^{p-1} + c_{2}\cdot\log(\phi(x))^{p-1},	& \text{if }\; \mu \not= 0, \; \lambda \not= \mu
	\end{cases}
\end{equation*}
is proper $p$-harmonic on an appropriate open subdomain of $M$.
Even though this method of construction seems simple at first glance, it certainly comes at a cost - it relies on the existence of eigenfunctions, which are not easy to find in general.
In fact, it is usually not difficult to spot functions which are eigenfunctions with respect to the Laplace-Beltrami operator due to its linearity, but finding functions which are also eigenfunctions with respect to the conformality operator can be quite a daunting task.

\medskip

\begin{table}[t]
\small
	
\renewcommand{\arraystretch}{2}
	
\makebox[\textwidth][c]
{
\begin{tabular}{llccr}
\hline
$U/K$ 	& Eigenfunction  			& $\lambda$ 				& $\mu$ 			& Conditions
\\ 
\hline
$\SU n / \SO n$ & $\trace(z^t A z)$ 		& $-\frac{2(n^2+n-2)}{n}$ 		& $-\frac{4(n-1)}{n}$	& $A = aa^t$ for $a \in \cn^{n}$
\\
$\Sp n / \U n$ & $\trace(q^t A q)$			& $-2(n+1)$						& $-2$					& $A = aa^t$ for $a \in \cn^{2n}$
\\
$\SO{2n} / \U n$ & $\trace(x^t A xJ_n)$			& $-2(n-1)$						& $-1$				& $A = ab^t$ for $a,b \in V \subset \cn^{2n}$ isotropic\\
$\SU{2n} / \Sp n$ & $\trace(z^t A zJ_n)$			& $-\frac{2(2n^2-n-1)}{n}$						& $-\frac{2(n-1)}{n}$	& $A = ab^t$ for $a,b \in \cn^{2n}$
\\
\hline
\end{tabular}
}
\bigskip
\caption{Eigenfunctions on the compact symmetric spaces.}
\label{table-eigenfunctions}
\end{table}

In this paper, we consider the compact symmetric spaces
$$\SU n/\SO n,\ \Sp n/\U n,\ \SO {2n}/\U n,\ \SU{2n}/\Sp n,$$ 
as well as their non-compact duals
$$\SLR n / \SO n,\ \Sp{n,\rn} / \U n,\ \SOs{2n} / \U n,\ \SUs{2n} / \Sp n.$$
Eigenfunctions on the dividend group $G$ are already known for all of the above mentioned cases, see e.g.\ \cite{Gud-Sak-1} or \cite{Gud-Sob-1}. 
However, the issue with the known eigenfunctions is that they are not invariant under the action of the divisor group $K$, and hence do not induce eigenfunctions on the corresponding quotient space $G/K$. The $K$-invariance condition adds an additional degree of difficulty to the problem of finding eigenfunctions.
Our aim is therefore to construct new examples of eigenfunctions which are indeed $K$-invariant, hence induce proper $p$-harmonic functions on the quotient spaces $G/K$ via the method presented above. We summarize our results for the compact spaces in Table \ref{table-eigenfunctions}. Note that these constructions are sufficient, since the general duality principle developed in \cite{Gud-Sve-1} automatically yields examples of eigenfunctions on the non-compact duals as well.

\medskip

\noindent\textbf{Organisation:}
In Section \ref{section-duality} we give a general description of $p$-harmonic functions in the setting of Riemannian symmetric spaces and state a useful duality principle for those.  In Section \ref{section-relevant-groups} we discuss the necessary details for the Lie groups, relevant to this study.  In Sections \ref{section-SUn-SOn} and \ref{section-SO2n-Un}, we prove our main results, which are displayed in Table \ref{table-eigenfunctions}.


\section{$p$-Harmonic Functions on Symmetric Spaces}
\label{section-duality}

Let $(M,g)$ be a Riemannian symmetric space of non-compact type, $G$ be the connected component of the isometry group containing the neutral element and $K$ be the maximal compact subgroup.  For the Lie algebra $\g$ of $G$, we have the  Cartan decomposition $\g=\k\oplus\m$, where $\k$ is the Lie algebra of $K$ and $\m$ its orthogonal complement with respect to the $\Ad (K)$-invariant metric on $G$ induced by its Killing form. Let $\phi:U\to\cn$ be a locally defined real analytic function and $\hat\phi=\phi\circ\pi:G\to\cn$ be the $K$-invariant composition of $\phi$ with the natural projection $\pi:G\to G/K$. Then the Laplace-Beltrami operator $\hat\tau$ and the conformality operator $\hat\kappa$ on $G$ satisfy
$$\hat\tau(\hat\phi)=\sum_{Z\in\m}\big( Z^2(\hat\phi)-\nab ZZ(\hat\phi)\bigr),\ \ \hat\kappa (\hat\phi,\hat \phi)=\sum_{Z\in\m}Z(\hat f)^2.$$
We observe that
\begin{equation*}
	\tau(\phi)=\tau(\hat\phi \circ \pi)
	=d\hat\phi(\tau(\pi)) \circ \pi + \trace\nabla d\hat\phi(d\pi, d\pi) \circ \pi
	=\tau(\hat\phi)\circ \pi,
\end{equation*}
where the equalities follow from the facts that $\pi$ is harmonic and the restriction of $d\pi$ to the orthogonal complement of its kernel is an isometry onto the tangent space of $G/K$ at the corresponding point.
The latter fact also implies that
\begin{equation*}
	\kappa(\phi,\phi) = \kappa(\hat\phi,\hat\phi)\circ\pi.
\end{equation*}
This shows that a $K$-invariant eigenfunction $\hat\phi$ on $G$ induces an eigenfunction $\phi$ on the quotient space $G/K$ with exactly the same eigenvalues.
\smallskip

We now extend the real analytic $\hat\phi$ to a holomorphic function $\hat \phi^*:W^*\to\cn$ locally defined on the complexification $G^\cn$ of $G$.  The complex Lie group $G^\cn$ contains the compact subgroup $U$ with Lie algebra $\un=\k\oplus i\,\m$.  Let $\hat\phi^*:W^*\cap U\to\cn$ be the $K$-invariant restriction of $\hat\phi^*$ to $W^*\cap U$.  Then this induces a function $\phi^*:\pi^*(W^*\cap U)\to\cn$ defined locally  on the symmetric space $U/K$ which is the compact companion of $G/K$.  Here $\pi^*:U\to U/K$ is the corresponding natural projection. Then the Laplace-Beltrami operator $\hat\tau^*$ and the conformality operator $\hat\kappa^*$ on $U$ satisfy
$$\hat\tau(\hat \phi^*)=\sum_{Z\in\, i\,\m}\big( Z^2(\hat \phi^*)-\nab ZZ(\hat f^*)\bigr)=-\sum_{Z\in\,\m}\big( Z^2(\hat f^*)-\nab ZZ(\hat f^*)\bigr)$$
and
$$\hat\kappa^* (\hat f^*,\hat f^*)=\sum_{Z\in\, i\,\m}Z(\hat f^*)^2=-\sum_{Z\in\,\m}Z(\hat f^*)^2.$$

For $p$-harmonic function on symmetric spaces we have the following duality principle.  This was first introduced for harmonic morphisms in \cite{Gud-Sve-1} and then developed further for our situation in \cite{Gud-Mon-Rat-1}.

\begin{theorem}\label{theorem-duality}
	A complex-valued function $f:W\to\cn$ is proper p-harmonic if and only if its dual $f^*:W^*\to\cn$ is proper p-harmonic.
\end{theorem}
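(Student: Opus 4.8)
The plan is to reduce the statement to a purely local, infinitesimal identity relating the iterates of the Laplace--Beltrami operator on $G/K$ and on its compact companion $U/K$, and then to invoke the analytic continuation that connects $\hat\phi$ with $\hat\phi^*$. First I would recall that, by the discussion preceding the theorem, a function $f$ on an open subset of $G/K$ lifts to the $K$-invariant function $\hat f=f\circ\pi$ on $G$, whose Laplacian is $\hat\tau(\hat f)=\sum_{Z\in\m}(Z^2(\hat f)-\nab ZZ(\hat f))$, and similarly the dual $f^*$ on $U/K$ lifts to $\hat f^*$ on $U$ with $\hat\tau^*(\hat f^*)=\sum_{Z\in i\,\m}(Z^2(\hat f^*)-\nab ZZ(\hat f^*))=-\sum_{Z\in\m}(Z^2(\hat f^*)-\nab ZZ(\hat f^*))$. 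The key point is that the Levi-Civita connection terms $\nab ZZ$ are governed by the same structure constants of $\g^\cn$ in both cases, so under the substitution $Z\mapsto iZ$ on $\m$ the operator $\hat\tau$ transforms into $-\hat\tau^*$; iterating, $(\hat\tau)^p$ transforms into $(-1)^p(\hat\tau^*)^p$.

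The second step is to pass between $\hat f$ and $\hat f^*$ via holomorphic extension. The dual $f^*$ is by definition obtained from $f$ through the local holomorphic extension $\hat\phi^*$ of $\hat\phi$ to the complexification $G^\cn$, restricted to the compact real form $U$. Because $\hat\phi^*$ is holomorphic, applying a left-invariant vector field $Z\in\m\subset\g\subset\g^\cn$ or its imaginary multiple $iZ\in i\,\m\subset\un$ to $\hat\phi^*$ are related by the Cauchy--Riemann equations: $(iZ)(\hat\phi^*)=i\,Z(\hat\phi^*)$ as germs of holomorphic functions on $G^\cn$. Consequently the left-invariant differential operator $\sum_{Z\in\m}(Z^2-\nab ZZ)$, applied to the holomorphic germ, and the operator $-\sum_{Z\in i\,\m}((iZ)^2-\nab{iZ}{iZ})$ agree after the identification, which is exactly the statement that $\hat\tau(\hat\phi)$ and $-\hat\tau^*(\hat\phi^*)$ are holomorphic continuations of one another. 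Iterating $p$ times, $(\hat\tau)^p(\hat\phi)$ and $(-1)^p(\hat\tau^*)^p(\hat\phi^*)$ are holomorphic continuations of one another; in particular one vanishes on its (connected) domain if and only if the other does.

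The third step is simply to project back down. Since $\pi$ and $\pi^*$ are harmonic Riemannian submersions whose horizontal differentials are isometries onto the tangent spaces of $G/K$ and $U/K$, the computation in the excerpt shows $\tau^p(f)=(\hat\tau)^p(\hat f)\circ\pi$ and likewise $(\tau^*)^p(f^*)=(\hat\tau^*)^p(\hat f^*)\circ\pi^*$. Combining with the previous step, $\tau^p(f)\equiv 0$ on $W$ if and only if $(\tau^*)^p(f^*)\equiv 0$ on $W^*$. It remains to check that $f$ is \emph{proper} $p$-harmonic, i.e.\ that $p$ is minimal, if and only if $f^*$ is: this follows by applying the equivalence to all smaller exponents $r<p$, since $\tau^r(f)$ and $(-1)^{r}(\tau^*)^r(f^*)$ are again holomorphic continuations of one another and hence vanish simultaneously.

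I expect the main obstacle to be the bookkeeping in the second step: one must be careful that the connection terms $\nab ZZ$ really do continue holomorphically in the same way as the $Z^2$ terms — this is where the assumption that the Killing-form metric on $G$, $U$ and $G^\cn$ are compatible is used — and that the germs of holomorphic functions involved all share a common connected domain of definition so that "vanishes identically" transfers across the continuation. Everything else is the formal substitution $Z\leftrightarrow iZ$ together with the already-established fact that the projections $\pi,\pi^*$ intertwine the operators upstairs and downstairs. I would remark that this argument is essentially the one in \cite{Gud-Sve-1} and \cite{Gud-Mon-Rat-1}, adapted to the $p$-th iterate.
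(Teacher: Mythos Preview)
The paper does not actually prove this theorem: its entire proof reads ``For a proof of this result we recommend Section 8 of \cite{Gud-Mon-Rat-1}.'' Your sketch is precisely the kind of argument that reference contains --- the holomorphic extension to $G^\cn$, the sign flip $\hat\tau\leftrightarrow -\hat\tau^*$ under $Z\mapsto iZ$ on $\m$, iteration to $\tau^p$, and the identity-theorem transfer of vanishing --- so you are in complete agreement with the paper's (outsourced) approach, and you even acknowledge this in your final remark.
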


\begin{proof}
	For a proof of this result we recommend Section 8 of \cite{Gud-Mon-Rat-1}.
\end{proof}

\begin{remark}
	It should be noted that if $\phi:G/K\to\cn$ is an eigenfunction on the non-compact symmetric space $G/K$, such that 
	$$\tau(\phi)=\lambda\cdot\phi\ \ \text{and}\ \ \kappa(\phi,\phi)=\mu\cdot\phi^2,$$ 
	then its dual function $\phi^*:U/K\to\cn$, on the compact $U/K$, fulfills 
	$$\tau(\phi)=-\lambda\cdot\phi\ \ \text{and}\ \ \kappa(\phi,\phi)=-\mu\cdot\phi^2.$$ 
\end{remark}



\section{The Relevant Lie Groups}\label{section-relevant-groups}

Let us first recall the definitions of the classical Lie groups relevant for this study.  We refer the interested reader to the standard work \cite{Hel} which goes far beyond our presentation.

\smallskip
We denote the $n \times n$ identity matrix by $I_n$ and the standard complex structure on $\rn^{2n}$ by
\begin{equation*}
J_n = \begin{bmatrix}
0 & I_n\\
-I_n & 0
\end{bmatrix}.
\end{equation*}
Furthermore, for $1 \leq r,s \leq n$, we denote by $E_{rs}$ the $n\times n$ matrix with components $(E_{rs})_{\alpha\beta} = \delta_{r\alpha} \delta_{s\beta}$,
and we further define
\begin{equation*}
 X_{rs} = \tfrac{1}{\sqrt 2} (E_{rs} + E_{sr}), \quad Y_{rs} = \tfrac{1}{\sqrt 2} (E_{rs} - E_{sr}), \quad D_r = E_{rr}.
 \end{equation*} 

If $\mathbb F$ is either the field of the real numbers $\rn$ or that of the complex $\cn$, then the general linear group $\mathbf{GL}_n(\mathbb F)$ is defined as the set of all invertible matrices $x \in \mathbb F^{n\times n}$ and the special linear group $\mathbf{SL}_n(\mathbb F)$ is the subgroup of matrices in $\mathbf{GL}_n(\mathbb F)$ whose determinant is $1$.
The compact orthogonal and unitary groups are given by
\begin{eqnarray*}
\O n &=& \{ x \in \GLR n \,\mid\, x\cdot x^t = I_n \},\\
\U n &=& \{ z \in \GLC n \,\mid\, z\cdot z^* = I_n \},
\end{eqnarray*}
and their special counterparts are obtained by intersecting with the corresponding special linear groups, so that
\begin{eqnarray*}
\SO n &=& \O n \cap \SLR n,\\
\SU n &=& \U n \cap \SLC n.
\end{eqnarray*}
The quaternionic unitary group $\Sp n$ is defined as the intersection of the standard embedding
\begin{equation*}
\GLH n \ni z+jw \mapsto q = \begin{bmatrix}
z & w\\
-\bar{w} & \bar{z}
\end{bmatrix} \in \GLC{2n}
\end{equation*}
and the unitary group $\U{2n}$.
Some further interpretations are needed in order to understand the definitions of the relevant quotient spaces:

\begin{enumerate}
\item[(1)] $\SO n$ is a subgroup of $\SU n$, so the quotient space $\SU n / \SO n$ is naturally defined.
\vskip0.2cm
\item[(2)] $\U n$ can be embedded into $\Sp n$, via the map
\begin{equation}\label{eq-embed-un}
x+iy \mapsto \begin{bmatrix}
x & y\\
-y & x
\end{bmatrix},
\end{equation}
as shown by a simple calculation. Hence, we can view $\U n$ as a subgroup of $\Sp n$ and thus consider the quotient space $\Sp n / \U n$.

\vskip0.2cm

\item[(3)] $\U n$ can also be embedded into $\SO{2n}$ via the mapping (\ref{eq-embed-un}).
Indeed, the identity
\begin{equation*}
|\det(x+iy)|^2 = \det \begin{bmatrix}
x & y\\
-y & x
\end{bmatrix},
\end{equation*}
together with a simple calculation, shows that the image is contained in $\SO{2n}$.  Via this embedding, $\U n$ becomes a subgroup of $\SO{2n}$, allowing us to consider the quotient space $\SO{2n} / \U n$.

\vskip0.2cm

\item[(4)] $\Sp n$ is by definition a subgroup of $\U{2n}$, and it can be shown that the elements of $\Sp n$ have unit determinant, so the quotient space $\SU{2n} / \Sp n$ is well-defined. 
\end{enumerate}

Using the standard left-invariant Riemannian metric on $\GLC n$, induced by the inner product
\begin{equation*}
g(Z,W) = \Re\trace (ZW^*)
\end{equation*}
on the Lie algebra $\glc n$, we obtain the induced Riemannian metric on every Lie subgroup $G$ of $\GLC n$.
\smallskip

Let us now consider the standard complex linear representations of the compact Lie groups $\SO n$, $\SU n$ and $\Sp n$.  According to the Peter-Weyl theorem their matrix coefficients are all eigenfunctions of the Laplace-Beltrami operator with the same eigenvalue in each case.  For obvious reasons we are also interested in how they behave with respect to the corresponding comformality operator.  For the special orthogonal group $\SO n$ these matrix coefficients are the coordinate functions $x_{j\alpha}:\SO n\to\rn$ satisfying
\begin{equation*}
x = \begin{bmatrix}
x_{11} & \cdots & x_{1n}\\
\vdots & \ddots & \vdots\\
x_{n1} & \cdots & x_{nn}
\end{bmatrix}
\mapsto
x_{j\alpha}.
\end{equation*}
For the standard irreducible representations of $\SO n$, $\SU n$ and $\Sp n$ we have the following:
\begin{enumerate}
\item[(i)] For $1 \leq j,\alpha,k,\beta \leq n$, the matrix coefficients  $x_{j\alpha} : \SO n \to \rn$ satisfy
\begin{eqnarray}\label{eq-coordinate-functions-SOn}
\tau(x_{j\alpha}) &=& -\frac{n-1}{2}\cdot x_{j\alpha}, \nonumber\\
\kappa(x_{j\alpha}, x_{k\beta}) &=& -\frac{1}{2}\cdot (x_{j\beta}x_{k\alpha} - \delta_{jk}\delta_{\alpha\beta}),
\end{eqnarray}
Here we refer to Lemma 4.1 of \cite{Gud-Sak-1}.
\vskip0.2cm

\item[(ii)] For $1 \leq j,\alpha,k,\beta \leq n$, the matrix coefficients  $z_{j\alpha} : \SU n \to \cn$ satisfy
\begin{eqnarray}\label{eq-coordinate-functions-SUn}
\tau(z_{j\alpha}) &=& -\frac{n^2-1}{n}\cdot z_{j\alpha}, \nonumber\\
\kappa(z_{j\alpha}, z_{k\beta}) &=& - z_{j\beta}z_{k\alpha} + \frac{1}{n} \cdot z_{j\alpha} \, z_{k\beta},
\end{eqnarray}
This can be proven by utilising Lemma 5.1 of \cite{Gud-Sak-1} and a simple idea explained at the end of Section 4 in \cite{Gud-Sob-1}.
\vskip0.2cm

\item[(iii)] For $1 \leq j,\alpha,k,\beta \leq 2n$, the matrix coefficients $q_{j\alpha}: \Sp n \to \cn$ satisfy
\begin{eqnarray}\label{eq-coordinate-functions-Spn}
\tau(q_{j\alpha}) &=& -\frac{2n+1}{2}\cdot q_{j\alpha}, \nonumber\\ 
\kappa(q_{j\alpha}, q_{k\beta}) &=& -\frac{1}{2}\cdot q_{j\beta}q_{k\alpha} + \frac12 (J_n)_{jk} (J_n)_{\alpha\beta}.
\end{eqnarray}
These two identities form an improved version of Lemma 6.1 in \cite{Gud-Mon-Rat-1}.
We present a proof of these formulae in Appendix \ref{appendix-coordinate-spn}.
\end{enumerate}


\section{Eigenfunctions on $\SU n/\SO n$ and $\Sp n / \U n$}
\label{section-SUn-SOn}

The aim of this section is to prove the statements presented in Table \ref{table-eigenfunctions} concerning the compact symmetric spaces $\SU n/\SO n$ and $\Sp n / \U n$. 

\medskip

First, consider the map $\Phi : \SU n \to \SU n$ defined by
\begin{equation*}
\Phi(z) = z \cdot z^t.
\end{equation*}
Note that this map is $\SO n$-invariant, so that for any $f:\SU n\to\cn$, the composition $\phi=f\circ\Phi$ induces a function on the compact symmetric quotient space $\SU n / \SO n$.
As already indicated in Table \ref{table-eigenfunctions}, we will consider the case when $f$ is linear i.e. when
\begin{equation*}
f(z) = \sum_{j\alpha} A_{j\alpha}z_{j\alpha} = \trace(Az)
\end{equation*}
for some symmetric matrix $A\in\cn^{n\times n}$. Note that the symmetry condition on $A$ can be assumed, without loss of generality, since $\Phi(z)$ itself is symmetric and the product of a skew-symmetric and a symmetric matrix is traceless.

\begin{proposition}\label{proposition-SUn/SOn}
Let the complex symmetric matrix $A$ be given by $A = aa^t$ for some non-zero element $a\in\cn^n$. Further consider the function
$\phi:\SU n\to\cn$ with
\begin{equation*}
\phi(z) = \trace(A\Phi(z)) = \trace(z^tAz) = \sum_{j,\alpha}a_j\,a_\alpha\, \Phi_{j\alpha}(z).
\end{equation*}
Then $\phi$ is an $\SO n$-invariant eigenfunction on $\SU n$ satisfying 
\begin{equation*}
\tau(\phi) = -\frac{2(n^2+n-2)}{n} \cdot \phi
\quad\text{and}\quad
\kappa(\phi,\phi) = -\frac{4(n-1)}{n} \cdot \phi^2.
\end{equation*}
\end{proposition}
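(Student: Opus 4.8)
The plan is to exploit the rank-one structure of $A$ to rewrite the quadratic function $\phi$ as a sum of squares of \emph{linear} forms in the matrix coefficients of $\SU n$, and then to transport the identities \eqref{eq-coordinate-functions-SUn} through the Leibniz rules for $\tau$ and $\kappa$. Since $A=aa^t$, for any matrix $M$ one has $\trace(aa^t M)=a^tMa$, so
\begin{equation*}
\phi(z)=\trace\bigl(aa^t\,zz^t\bigr)=a^t(zz^t)a=(a^tz)(a^tz)^t=\sum_{k=1}^{n}w_k^2,\qquad w_k:=(a^tz)_k=\sum_{j=1}^{n}a_j\,z_{jk}.
\end{equation*}
Thus each $w_k\colon\SU n\to\cn$ is a fixed complex-linear combination of matrix coefficients. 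The $\SO n$-invariance is immediate, since for $g\in\SO n$ we have $\phi(zg)=\trace\bigl(g^tz^tAzg\bigr)=\trace(z^tAz)=\phi(z)$ because $g^tg=I_n$; hence $\phi$ descends to $\SU n/\SO n$.

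I would then record the first-order data of the $w_k$. Because all matrix coefficients $z_{j\alpha}$ share the same $\tau$-eigenvalue in \eqref{eq-coordinate-functions-SUn}, linearity of $\tau$ gives $\tau(w_k)=-\tfrac{n^2-1}{n}\,w_k$. Bilinearity of $\kappa$ together with the second line of \eqref{eq-coordinate-functions-SUn} gives
\begin{equation*}
\kappa(w_k,w_l)=\sum_{j,m}a_j a_m\Bigl(-z_{jl}z_{mk}+\tfrac1n z_{jk}z_{ml}\Bigr)=-w_l w_k+\tfrac1n w_k w_l=-\frac{n-1}{n}\,w_k w_l .
\end{equation*}
The crucial point is that $\kappa(w_k,w_l)$ comes out as a scalar multiple of the \emph{product} $w_kw_l$, with no leftover $\delta$-terms; this is exactly what the hypothesis $\mathrm{rank}\,A=1$ provides, and it is what allows the identities below to close.

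Finally I would assemble everything using the standard Leibniz rules $\tau(fg)=f\tau(g)+g\tau(f)+2\kappa(f,g)$ and $\kappa(fg,h)=f\kappa(g,h)+g\kappa(f,h)$. This yields
\begin{equation*}
\tau(\phi)=\sum_{k}\tau(w_k^2)=\sum_{k}\bigl(2w_k\tau(w_k)+2\kappa(w_k,w_k)\bigr)=-\frac{2(n^2-1)+2(n-1)}{n}\sum_{k}w_k^2=-\frac{2(n^2+n-2)}{n}\,\phi,
\end{equation*}
and, using the derivation property of $\kappa$ in both slots,
\begin{equation*}
\kappa(\phi,\phi)=\sum_{k,l}\kappa(w_k^2,w_l^2)=\sum_{k,l}4\,w_kw_l\,\kappa(w_k,w_l)=-\frac{4(n-1)}{n}\sum_{k,l}w_k^2w_l^2=-\frac{4(n-1)}{n}\,\phi^2 ,
\end{equation*}
which are the asserted eigenvalues; since $a\neq 0$, the function $\phi$ is readily checked to be non-constant, hence a genuine eigenfunction.

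The only real idea is the first step — recognising that a rank-one $A$ collapses $\phi$ into a sum of squares of linear forms $w_k$; everything afterwards is bookkeeping with \eqref{eq-coordinate-functions-SUn} and the Leibniz rules. The point to watch is the cancellation producing $\kappa(w_k,w_l)=-\tfrac{n-1}{n}w_kw_l$: a matrix $A$ of rank $\ge 2$ would leave cross-terms that do not recombine into a multiple of $\phi^2$, so the $\kappa$-eigen property would genuinely fail, which is why the rank-one assumption is essential rather than cosmetic. I would also double-check the arithmetic $(n^2-1)+(n-1)=n^2+n-2$ when reading off $\lambda$.
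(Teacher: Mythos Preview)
Your argument is correct. The key identity $\kappa(w_k,w_l)=-\tfrac{n-1}{n}\,w_kw_l$ is right, and the Leibniz bookkeeping afterwards is sound; the arithmetic $(n^2-1)+(n-1)=n^2+n-2$ checks out.

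The organisation, however, differs from the paper's. The paper works at the level of the entries $\Phi_{j\alpha}=\sum_r z_{jr}z_{\alpha r}$ of the $\SO n$-invariant matrix $\Phi=zz^t$: it first computes the \emph{general} four-index formula
\[
\kappa(\Phi_{j\alpha},\Phi_{k\beta})=-2\,\Phi_{j\beta}\Phi_{k\alpha}-2\,\Phi_{jk}\Phi_{\alpha\beta}+\tfrac{4}{n}\,\Phi_{j\alpha}\Phi_{k\beta},
\]
and only then contracts against $a_ja_\alpha a_k a_\beta$, at which point all three terms become $\phi^2$. Your route instead exploits the rank-one hypothesis at the outset to rewrite $\phi=\sum_k w_k^2$ as a sum of squares of \emph{linear} forms, so that the $\kappa$-computation involves only two indices rather than four. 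This is cleaner and shorter for the present statement; the price is that you do not obtain the intermediate formula for $\kappa(\Phi_{j\alpha},\Phi_{k\beta})$, which the paper reuses (with minor modification) for the companion result on $\Sp n/\U n$ and whose analogue drives the arguments for $\SO{2n}/\U n$ and $\SU{2n}/\Sp n$ in the next section.
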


\begin{proof} 
Observe that we can write
$$\Phi_{j\alpha}(z)=\sum_{r=1}^{n}z_{jr}z_{\alpha r}.$$
Then we see from equations (\ref{eq-coordinate-functions-SUn}) that 
\begin{eqnarray*}
\tau(\Phi_{j\alpha})
&=&\sum_{r=1}^{n}\tau(z_{jr}z_{\alpha r})\\
&=&\sum_{r=1}^{n}(\tau(z_{jr})z_{\alpha r}+2\cdot \kappa(z_{jr},z_{\alpha r})+z_{jr}\tau(z_{\alpha r}))\\
&=&-\sum_{r=1}^{n}\left(\frac{n^2-1}{n} + 2 \left(1 - \frac{1}{n}\right) + \frac{n^2-1}{n}\right)z_{jr}z_{\alpha r}\\ 
&=&-\frac{2(n^2+n-2)}{n}\cdot\Phi_{j\alpha}.
\end{eqnarray*}
The formula for $\tau(\phi)$ thus follows immediately since $\tau$ is linear.

As for the conformality operator, we have
\begin{eqnarray*}
\kappa(\Phi_{j\alpha},\Phi_{k\beta})
&=&\sum_{r,s=1}^{n}\kappa(z_{jr}z_{\alpha r},z_{ks}z_{\beta s})\\
&=&\sum_{r,s=1}^{n}
\Bigl(z_{\alpha r}z_{\beta s}\kappa(z_{jr},z_{ks}) + z_{\alpha r}z_{ks}\kappa(z_{jr},z_{\beta s})\\
& &\qquad\quad + z_{jr}z_{\beta s}\kappa(z_{\alpha r},z_{ks}) + z_{jr}z_{ks}\kappa(z_{\alpha r},z_{\beta s})\Bigr)\\
&=& \sum_{r,s=1}^{n} \left(
-z_{\alpha r}z_{\beta s}z_{js}z_{kr} + \frac{1}{n}\cdot z_{\alpha r}z_{\beta s}z_{jr}z_{ks} \right.\\
& & \qquad\quad - z_{\alpha r}z_{ks}z_{js}z_{\beta r} + \frac{1}{n}\cdot z_{\alpha r}z_{ks}z_{jr} z_{\beta s}\\
& & \qquad\quad - z_{jr}z_{\beta s}z_{\alpha s}z_{kr} + \frac{1}{n} \cdot z_{jr}z_{\beta s}z_{\alpha r}z_{ks} \\
& & \left. \qquad\quad - z_{jr}z_{ks}z_{\alpha s}z_{\beta r} + \frac{1}{n} \cdot z_{jr}z_{ks}z_{\alpha s}z_{\beta r} \right)\\
&=&-2\cdot \Phi_{j\beta}\Phi_{k\alpha} - 2\cdot \Phi_{jk}\Phi_{\alpha\beta} + \frac{4}{n} \cdot \Phi_{j\alpha}\Phi_{k\beta},
\end{eqnarray*}
and thus by bilinearity
\begin{eqnarray*}
\kappa(\phi,\phi)
&=&
\sum_{j,\alpha, k,\beta}
a_j\,a_\alpha \,a_k\,a_\beta\cdot \kappa(\Phi_{j\alpha},\Phi_{k\beta})\\
&=&
-2 \left( \sum_{j,\beta} a_j \,a_\beta\cdot\Phi_{j\beta} \right)\left( \sum_{k,\alpha} a_k \,a_\alpha\cdot\Phi_{k\alpha} \right)\\
&&
-2 \left( \sum_{j,k} a_j \,a_k\cdot\Phi_{jk} \right)\left( \sum_{\alpha,\beta} a_\alpha \,a_\beta\cdot\Phi_{\alpha\beta} \right)\\
&&
+\frac{4}{n} \left( \sum_{j,\alpha} a_j \,a_\alpha\cdot\Phi_{j\alpha} \right)\left( \sum_{k,\beta} a_k \,a_\beta\cdot\Phi_{k\beta} \right)\\
&=&
-\frac{4(n-1)}{n} \cdot \phi^2,
\end{eqnarray*}
as claimed.
\end{proof}

\medskip

As for $\Sp n / \U n$, the construction is quite similar but the proof requires a minor modification.
Here, we instead consider the $\U n$-invariant map $\Phi : \Sp n \to \Sp n$ given by
\begin{equation*}
\Phi(q) = q \cdot q^t,
\end{equation*}
and as before we take the trace of its product with a symmetric matrix.

\begin{proposition}
Let the complex symmetric matrix $A$ be given by $A = aa^t$ for some non-zero element $a\in\cn^{2n}$. Further consider the function
$\phi:\Sp n\to\cn$ with
\begin{equation*}
\phi(q) = \trace(A\Phi(q)) = \trace(q^tAq) = \sum_{j,\alpha}a_j\,a_\alpha\, \Phi_{j\alpha}(q).
\end{equation*}
Then $\phi$ is an $\U n$-invariant eigenfunction on $\Sp n$ satisfying 
\begin{equation*}
\tau(\phi) = -2(n+1) \cdot \phi
\quad\text{and}\quad
\kappa(\phi,\phi) = -2 \cdot \phi^2.
\end{equation*}
\end{proposition}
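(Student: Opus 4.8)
The strategy mirrors the proof of Proposition~\ref{proposition-SUn/SOn} almost verbatim, with the coordinate-function identities \eqref{eq-coordinate-functions-Spn} for $\Sp n$ replacing those for $\SU n$. First I would write $\Phi_{j\alpha}(q) = \sum_{r=1}^{2n} q_{jr}q_{\alpha r}$ and compute $\tau(\Phi_{j\alpha})$ using the product rule $\tau(uv) = \tau(u)v + 2\kappa(u,v) + u\tau(v)$ together with \eqref{eq-coordinate-functions-Spn}. The Laplace term gives a factor $-\frac{2n+1}{2}$ from each of $\tau(q_{jr})$ and $\tau(q_{\alpha r})$, while the conformality term contributes $2\kappa(q_{jr},q_{\alpha r}) = -q_{jr}q_{\alpha r} + (J_n)_{j\alpha}(J_n)_{rr}$. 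The key simplification here is that $(J_n)_{rr} = 0$ for all $r$, so the $J_n$-correction term drops out entirely upon summing over $r$, leaving $\tau(\Phi_{j\alpha}) = -(2n+1+1)\Phi_{j\alpha} = -2(n+1)\Phi_{j\alpha}$; linearity of $\tau$ then gives the stated eigenvalue for $\phi$.

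For the conformality operator, I would expand $\kappa(\Phi_{j\alpha},\Phi_{k\beta}) = \sum_{r,s=1}^{2n}\kappa(q_{jr}q_{\alpha r}, q_{ks}q_{\beta s})$ via bilinearity into four terms of the form $q_{\alpha r}q_{\beta s}\kappa(q_{jr},q_{ks})$ etc. Substituting \eqref{eq-coordinate-functions-Spn}, each $\kappa(q_{jr},q_{ks}) = -\frac12 q_{js}q_{kr} + \frac12 (J_n)_{jk}(J_n)_{rs}$ splits into a ``main'' piece and a $J_n$-piece. The main pieces, after summing over $r,s$, should reassemble into $-2\Phi_{j\beta}\Phi_{k\alpha}$ (the $\SU n$ computation had an extra $-2\Phi_{jk}\Phi_{\alpha\beta}$ and a $+\frac4n\Phi_{j\alpha}\Phi_{k\beta}$, but here the coefficient structure is different). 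The $J_n$-pieces will produce terms like $\sum_{r,s}(J_n)_{jk}(J_n)_{rs}q_{\alpha r}q_{\beta s} = (J_n)_{jk}\,(qJ_nq^t)_{\alpha\beta}$; I would need to track these carefully. The crucial contraction with $a\otimes a\otimes a\otimes a$ at the end must then yield precisely $-2\phi^2$.

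\textbf{Main obstacle.} The delicate point is the bookkeeping of the $J_n$-correction terms in the conformality computation. Unlike the $\tau$ computation, where $(J_n)_{rr}=0$ kills everything, in $\kappa$ the off-diagonal entries $(J_n)_{rs}$ survive, and one gets contributions involving the matrix $qJ_nq^t$. I expect that, because $q \in \Sp n$ satisfies $q J_n q^t = ?$ — more precisely one should recall the defining relation of $\Sp n$ under the given embedding — these terms either vanish or combine with the main terms. (Concretely, $\Sp n$ consists of the $q\in\U{2n}$ with $q J_n q^t = J_n$ — or the analogous relation dictated by the conventions in Section~\ref{section-relevant-groups}; this will make $\sum_{r,s}(J_n)_{rs}q_{\alpha r}q_{\beta s} = (J_n)_{\alpha\beta}$, a constant, and similarly for the other index pairs.) When contracting against $a_ja_\alpha a_k a_\beta$, the symmetry $a\otimes a$ in the pairs $(j,\alpha)$ and $(k,\beta)$ forces any term carrying a single factor $(J_n)_{jk}$ or $(J_n)_{\alpha\beta}$ to vanish, since $J_n$ is skew-symmetric while $a_ja_k$ is symmetric. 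Thus all $J_n$-corrections should disappear after contraction, and only the ``main'' piece $-2\Phi_{j\beta}\Phi_{k\alpha}$ survives, giving $\kappa(\phi,\phi) = -2\phi^2$. Verifying that the surviving main piece has exactly coefficient $-2$ (and that no stray $\Phi_{jk}\Phi_{\alpha\beta}$-type term with a nonzero coefficient appears, or if it does, that it too is killed by skew-symmetry) is the one place where care is genuinely required; everything else is mechanical.
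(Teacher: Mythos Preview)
Your plan is essentially the paper's proof: compute $\tau(\Phi_{j\alpha})$ and $\kappa(\Phi_{j\alpha},\Phi_{k\beta})$ from \eqref{eq-coordinate-functions-Spn}, use $qJ_nq^t=J_n$ to reduce the $J_n$-correction terms to constants, and then contract against $a_ja_\alpha a_ka_\beta$, with the skew-symmetry of $J_n$ against the symmetry of $aa^t$ killing the correction (the paper phrases this last step as $\trace(AJ_n)=0$). One correction to your speculation: the main piece of $\kappa(\Phi_{j\alpha},\Phi_{k\beta})$ is \emph{not} $-2\Phi_{j\beta}\Phi_{k\alpha}$ but rather $-(\Phi_{j\beta}\Phi_{k\alpha}+\Phi_{jk}\Phi_{\alpha\beta})$; the $\Phi_{jk}\Phi_{\alpha\beta}$ term does appear and is \emph{not} killed by any skew-symmetry---instead it contributes an additional $-\phi^2$ upon contraction, which is exactly what you need to reach $-2\phi^2$.
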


\begin{proof}
Using similar techniques as in the proof of Proposition \ref{proposition-SUn/SOn} together with the fact that $q \cdot J_n \cdot q^t = J_n$ for all $q \in \Sp n$, one first shows that
\begin{eqnarray*}
\tau(\Phi_{j\alpha})
&=&-2(n+1)\cdot\Phi_{j\alpha},\\
\kappa(\Phi_{j\alpha},\Phi_{k\beta})
&=& -(\Phi_{k\alpha}\Phi_{j\beta} + \Phi_{jk}\Phi_{\alpha\beta})  + (J_n)_{\alpha k} (J_n)_{j\beta} + (J_n)_{jk} (J_n)_{\alpha\beta},
\end{eqnarray*}
from which it easily follows that
\begin{eqnarray*}
\tau(\phi) &=& -2(n+1)\cdot\phi\\
\kappa(\phi,\phi)
&=& -2\cdot \phi^2 + 2 \cdot (\trace(AJ_n))^2 = -2\cdot \phi^2,
\end{eqnarray*}
where the final equality follows since the product of a symmetric and a skew-symmetric matrix is traceless.
\end{proof}


\section{Eigenfunctions on $\SO{2n}/\U n$ and $\SU{2n} / \Sp n$}\label{section-SO2n-Un}

The aim of this section is to prove the statements presented in Table \ref{table-eigenfunctions} concerning the compact symmetric space $\SO{2n}/\U n$ and $\SU{2n}/\Sp n$. 

\medskip

Firstly, consider the map $\Phi : \SO{2n} \to \SO{2n}$
\begin{equation*}
\Phi(x) = x\cdot J_n \cdot x^t.
\end{equation*}
A simple calculation shows that this map is $\U n$-invariant.
As before, we now wish to consider the function $\phi : \SO{2n} \to \cn$ given by
\begin{equation*}
\phi(x) = \trace(A\Phi(x)) = \trace(x^tAxJ_n)
\end{equation*}
for some skew-symmetric matrix $A \in \cn^{2n\times 2n}$.
Note that in this case the skew-symmetry condition can be assumed without loss of generality, since $\Phi(x)$ is skew-symmetric.

\begin{proposition}\label{proposition-SO2n/Un}
Let $V \subset \cn^{2n}$ be an isotropic subspace and let $a,b\in V \setminus\{0\}$ be linearly independent.
Let $A \in \cn^{2n\times 2n}$ be the skew symmetric matrix
\begin{equation*}
	A = \sum_{i,j=1}^{2n} a_i b_j Y_{ij}.
\end{equation*}
Define the function $\phi:\SO{2n} \to \cn$ by
\begin{equation*}
\phi(x) = \trace(A\Phi(x)) = \trace(x^tAxJ_n) = -\sum_{j,\alpha=1}^{2n} A_{j\alpha}\Phi_{j\alpha}(x),
\end{equation*}
Then $\phi$ is a $\U n$-invariant eigenfunction on $\SO{2n}$ satisfying
\begin{equation*}
\tau(\phi) = -2(n-1)\cdot \phi
\quad\text{and}\quad
\kappa(\phi,\phi) = -\phi^2.
\end{equation*}
\end{proposition}

\begin{proof}
Using (\ref{eq-coordinate-functions-SOn}), similar calculations as in the proof of Proposition \ref{proposition-SUn/SOn} show that
\begin{eqnarray*}
\tau(\Phi_{j\alpha})&=&-2(n-1)\cdot\Phi_{j\alpha}\\
\kappa(\Phi_{j\alpha},\Phi_{k\beta}) &=& -(\Phi_{j\beta}\Phi_{k\alpha}+\Phi_{jk}\Phi_{\alpha\beta})-(\delta_{k\alpha}\delta_{j\beta}-\delta_{jk}\delta_{\alpha\beta}),
\end{eqnarray*}
where in the proof of the latter formula one also uses the fact that $xx^t = I_{2n}$ for all $x \in \SO{2n}$ in order to simplify the terms containing Kronecker deltas.

The formula for $\tau(\phi)$ is thus immediate by linearity.
For the conformality operator, we first note that
\begin{equation*}
A_{j\alpha} = \frac{1}{\sqrt 2}(a_jb_\alpha - a_\alpha b_j).
\end{equation*}
So we calculate
\begin{eqnarray*}
\kappa(\phi, \phi)
&=&
\sum_{j\alpha k\beta} A_{j\alpha}A_{k\beta} \,\kappa(\Phi_{j\alpha},\Phi_{k\beta})\\
&=&
-\sum_{j\alpha k\beta} A_{j\alpha}A_{k\beta}(\Phi_{j\beta}\Phi_{k\alpha} + \Phi_{jk}\Phi_{\alpha\beta})\\
& &
\qquad - \sum_{j\alpha k\beta} A_{j\alpha}A_{k\beta} (\delta_{k\alpha}\delta_{j\beta} - \delta_{jk}\delta_{\alpha\beta})\\
&=&
-\frac12\sum_{j\alpha k\beta} (a_jb_\alpha - a_\alpha b_j)(a_kb_\beta - a_\beta b_k)(\Phi_{j\beta}\Phi_{k\alpha} + \Phi_{jk}\Phi_{\alpha\beta})\\
& &
\qquad- \sum_{jk} A_{jk}A_{kj} + \sum_{j\beta} A_{j\beta}A_{j\beta}.
\end{eqnarray*}
Now there are two types of terms appearing in the first sum, namely
\begin{equation*}
\left(\sum_{j\alpha} a_jb_\alpha \Phi_{j\alpha}\right)^2 = \frac12 \phi^2,
\end{equation*}
which appears 4 times with a plus sign in front,
and
\begin{equation*}
\left(\sum_{j\alpha} a_ja_\alpha \Phi_{j\alpha}\right)\left(\sum_{k\beta} b_kb_\alpha \Phi_{k\beta}\right)=0,
\end{equation*}
where the latter equality follows since $\Phi_{j\alpha}$ is skew-symmetric (even if this were not the case, these terms would cancel out since they appear twice with a plus sign and twice with a minus sign in front).
Thus, we get
\begin{eqnarray*}
\kappa(\phi,\phi) = -\phi^2 + 2\sum_{jk} A_{jk}A_{jk} = -\phi^2 + 2[(a,a)(b,b) - (a,b)^2] = -\phi^2, 
\end{eqnarray*}
since $a$ and $b$ belong to an isotropic subspace $V$ of $\cn^{2n}$.
\end{proof}

Finally, the construction on the space $\SU{2n} / \Sp n$ works in a similar manner, with even weaker assumptions.
Here, we consider the $\Sp n$-invariant mapping $\Phi : \SU{2n} \to \SU{2n}$ given by
\begin{equation*}
\Phi(z) = z\cdot J_n \cdot z^t.
\end{equation*}

\begin{proposition}
For non-zero linearly independent elements $a,b\in\cn^{2n}$, let $A\in \cn^{2n\times 2n}$ be the skew-symmetric matrix
\begin{equation*}
	A = \sum_{i,j=1}^{2n} a_i b_j Y_{ij}.
\end{equation*}	
Define the function $\phi:\SU{2n} \to \cn$ by
\begin{equation*}
\phi(z) = \trace(A\Phi(z)) = \trace(z^tAzJ_n) = -\sum_{j\alpha} A_{j\alpha}\Phi_{j\alpha}(z).
\end{equation*}
Then $\phi$ is a $\Sp n$-invariant eigenfunction on $\SU{2n}$ satisfying
\begin{equation*}
\tau(\phi) = -\frac{2(2n^2-n-1)}{n}\cdot \phi
\quad\text{and}\quad
\kappa(\phi,\phi) = -\frac{2(n-1)}{n} \cdot \phi^2.
\end{equation*}
\end{proposition}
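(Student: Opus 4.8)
The plan is to proceed exactly as in the proofs of Propositions \ref{proposition-SUn/SOn} and \ref{proposition-SO2n/Un}, but using the $\SU{2n}$ version of (\ref{eq-coordinate-functions-SUn}), i.e.\ for $1\le j,\alpha,k,\beta\le 2n$ the identities
\begin{equation*}
\tau(z_{j\alpha}) = -\frac{4n^2-1}{2n}\cdot z_{j\alpha}, \qquad \kappa(z_{j\alpha},z_{k\beta}) = -z_{j\beta}z_{k\alpha} + \frac{1}{2n}\cdot z_{j\alpha}z_{k\beta}.
\end{equation*}
Writing $\Phi_{j\alpha}(z) = \sum_{r,s}(J_n)_{rs}\,z_{jr}z_{\alpha s}$ and noting that $\Phi(z) = zJ_nz^t$ is skew-symmetric (so Lemma \ref{lemma-skew} will be available), the computation splits into three steps: the eigenvalue of $\tau$ on the entries $\Phi_{j\alpha}$, the action of $\kappa$ on pairs $\Phi_{j\alpha},\Phi_{k\beta}$, and the contraction against the chosen matrix $A$. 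The $\Sp n$-invariance of $\phi$ is immediate from $q J_n q^t = J_n$ for $q\in\Sp n$, exactly as in the proof for $\Sp n/\U n$, so $\phi$ does descend to $\SU{2n}/\Sp n$.

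For the first step I would expand $\tau(\Phi_{j\alpha}) = \sum_{r,s}(J_n)_{rs}\bigl(\tau(z_{jr})z_{\alpha s} + 2\kappa(z_{jr},z_{\alpha s}) + z_{jr}\tau(z_{\alpha s})\bigr)$ and substitute the identities above. All terms proportional to $z_{jr}z_{\alpha s}$ reassemble into $-\tfrac{2(2n^2-1)}{n}\Phi_{j\alpha}$, while the off-diagonal piece $-z_{js}z_{\alpha r}$ of $\kappa(z_{jr},z_{\alpha s})$, after relabelling $r\leftrightarrow s$ and using $(J_n)_{sr}=-(J_n)_{rs}$, contributes an extra $+2\Phi_{j\alpha}$; collecting these gives $\tau(\Phi_{j\alpha}) = -\tfrac{2(2n^2-n-1)}{n}\Phi_{j\alpha}$. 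Since $\phi = -\sum_{j\alpha}A_{j\alpha}\Phi_{j\alpha}$, the stated formula $\tau(\phi)=-\tfrac{2(2n^2-n-1)}{n}\phi$ follows by linearity.

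For the second step I would expand $\kappa(\Phi_{j\alpha},\Phi_{k\beta}) = \sum_{r,s,t,u}(J_n)_{rs}(J_n)_{tu}\,\kappa(z_{jr}z_{\alpha s},z_{kt}z_{\beta u})$, apply the Leibniz rule for $\kappa$ in each slot to break $\kappa(z_{jr}z_{\alpha s},z_{kt}z_{\beta u})$ into four terms, and substitute the coordinate identities. Each of the resulting eight monomials, once one identifies which pair of summation indices the two surviving factors of $J_n$ contract and uses the skew-symmetry of $J_n$ and of $\Phi$ to normalise the index order, collapses into a product of two entries of $\Phi$. I expect the outcome
\begin{equation*}
\kappa(\Phi_{j\alpha},\Phi_{k\beta}) = -2\bigl(\Phi_{j\beta}\Phi_{k\alpha} + \Phi_{jk}\Phi_{\alpha\beta}\bigr) + \frac{2}{n}\cdot\Phi_{j\alpha}\Phi_{k\beta},
\end{equation*}
which is the exact analogue of the $\SO{2n}$ identity in Proposition \ref{proposition-SO2n/Un} but with no residual Kronecker-delta term, precisely because the $\SU$ coordinate identity (unlike the $\SO$ one) contains none — this is the source of the weaker hypothesis on $a$.

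For the final step, since $A = a_1Y_{rs} + a_2Y_{rq} + a_3Y_{sq}$ with $1\le r<s<q\le 2n$, the product $A_{j\alpha}A_{k\beta}$ vanishes unless all four indices $j,\alpha,k,\beta$ lie in the three-element set $\{r,s,q\}$; by the pigeonhole principle at least two of them coincide, so Lemma \ref{lemma-skew} lets us replace $\Phi_{j\beta}\Phi_{k\alpha} + \Phi_{jk}\Phi_{\alpha\beta}$ by $\Phi_{j\alpha}\Phi_{k\beta}$ throughout the sum. Hence
\begin{equation*}
\kappa(\phi,\phi) = \Bigl(-2 + \frac{2}{n}\Bigr)\sum_{j\alpha k\beta}A_{j\alpha}A_{k\beta}\,\Phi_{j\alpha}\Phi_{k\beta} = -\frac{2(n-1)}{n}\Bigl(\sum_{j\alpha}A_{j\alpha}\Phi_{j\alpha}\Bigr)^{2} = -\frac{2(n-1)}{n}\cdot\phi^{2},
\end{equation*}
as claimed; here, unlike in the $\SO{2n}$ case where the delta terms produced a leftover multiple of $a_1^2+a_2^2+a_3^2$, nothing needs to be cancelled and no isotropy condition on $a$ is required. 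I expect the only real obstacle to be the bookkeeping in the $\kappa(\Phi_{j\alpha},\Phi_{k\beta})$ step: one must track carefully which pair of indices each of the two $J_n$-factors contracts and apply the skew-symmetries of $J_n$ and $\Phi$ consistently, since a single sign slip there would corrupt the final eigenvalue. Everything else is routine and parallels the two earlier propositions.
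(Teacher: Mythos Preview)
Your proposal is correct and follows exactly the approach the paper indicates: mimic the proof of Proposition~\ref{proposition-SO2n/Un} using the $\SU{2n}$ coordinate identities~(\ref{eq-coordinate-functions-SUn}) in place of~(\ref{eq-coordinate-functions-SOn}), invoke Lemma~\ref{lemma-skew} via the pigeonhole argument on the three indices $r,s,q$, and observe that the absence of Kronecker-delta terms in~(\ref{eq-coordinate-functions-SUn}) is precisely why no isotropy condition on $a$ is needed. Your intermediate formulae for $\tau(\Phi_{j\alpha})$ and $\kappa(\Phi_{j\alpha},\Phi_{k\beta})$ check out, and you also correctly note that the invariance in question is $\Sp n$-invariance (the statement contains a typo).
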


\begin{proof}
The proof is similar as that of Proposition \ref{proposition-SO2n/Un}, where one instead uses the identities (\ref{eq-coordinate-functions-SUn}).
\end{proof}

\begin{remark}
Note that, unlike in Proposition \ref{proposition-SO2n/Un}, the isotropy condition on $a,b$ is not required in this case.
For the case of $\SO{2n} / \U n$ one needs the isotropy condition  because of the final term in $\kappa$ which appears as a consequence of the Kronecker deltas from formula (\ref{eq-coordinate-functions-SOn}), whereas these deltas do not appear in the formula (\ref{eq-coordinate-functions-SUn}) for the coordinate functions on $\SU{2n}$.
\end{remark}

\section{Acknowledgements}

The authors would like to thank Fran Burstall for useful discussions on this work.

Anna Siffert gratefully acknowledges the
supports of the Deutsche Forschungsgemeinschaft (DFG, German Research Foundation) - Project-ID 427320536 - SFB 1442, as well as Germany's Excellence Strategy EXC 2044 390685587, Mathematics M\"unster: Dynamics-Geometry-Structure.

Marko Sobak gratefully acknowledges the
support of Germany's Excellence Strategy EXC 2044 390685587, Mathematics M\"unster: Dynamics-Geometry-Structure.

\appendix


\section{Coordinate functions on $\Sp n$}\label{appendix-coordinate-spn}

The aim of this appendix is to show the following result.

\begin{lemma}\label{lemma-coordinate-spn}
	Let $q_{j\alpha} : \Sp n \to \cn$ denote the standard coordinate functions on $\Sp n$.
	Then
	\begin{eqnarray*}
		\tau(q_{j\alpha}) &=& -\frac{2n+1}{2} \cdot q_{j\alpha},\\
		\kappa(q_{j\alpha}, q_{k\beta}) &=& -\frac{1}{2} \cdot q_{k\alpha}q_{j\beta} + \frac{1}{2} (J_n)_{jk}(J_n)_{\alpha\beta}.
	\end{eqnarray*}
\end{lemma}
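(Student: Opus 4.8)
The plan is to exploit that the metric on $\Sp n$ is bi-invariant. Since $g(Z,W)=\Re\trace(ZW^*)$ and every $q\in\Sp n$ is unitary, the induced metric on $\Sp n$ is left-invariant, and since $\Ad(q)$ acts by conjugation with a unitary matrix, $g$ restricted to $\sp n$ is $\Ad(\Sp n)$-invariant; hence the metric is bi-invariant and $\nabla_XX=0$ for every left-invariant vector field $X$. Fixing a $g$-orthonormal basis $\{X_a\}$ of $\sp n$, the usual expressions for $\tau$ and $\kappa$ in a left-invariant orthonormal frame, together with $X_a(q_{j\alpha})(q)=(qX_a)_{j\alpha}$ and $X_a^2(q_{j\alpha})(q)=(qX_a^2)_{j\alpha}$, reduce the whole computation to the two algebraic objects
$$C:=\sum_aX_a^2\in\glc{2n},\qquad S_{p\alpha,\ell\beta}:=\sum_a(X_a)_{p\alpha}(X_a)_{\ell\beta},$$
since then $\tau(q_{j\alpha})=(qC)_{j\alpha}$ and $\kappa(q_{j\alpha},q_{k\beta})=\sum_{p,\ell}q_{jp}q_{k\ell}\,S_{p\alpha,\ell\beta}$.

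For the formula for $\tau$, I would note that $C$ is the Casimir element in the standard representation and satisfies $\Ad(q)C=C$ for all $q$ (as $\{\Ad(q)X_a\}$ is again an orthonormal basis of $\sp n$), so $C$ commutes with every $q\in\Sp n$; since $\cn^{2n}$ is an irreducible $\Sp n$-module, Schur's lemma gives $C=c\,I_{2n}$. Taking traces and using $X_a^*=-X_a$, so that $\trace(X_a^2)=-\trace(X_aX_a^*)=-g(X_a,X_a)=-1$, yields $2nc=-\dim\Sp n=-n(2n+1)$, hence $c=-\tfrac{2n+1}{2}$.

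For $\kappa$, I would view $S$ as the matrix of the operator $T:=\sum_a\rho(X_a)\otimes\rho(X_a)$ on $\cn^{2n}\otimes\cn^{2n}$, with $\rho$ the standard representation; the same averaging argument shows $T$ is $\Sp n$-equivariant. Since $\cn^{2n}\otimes\cn^{2n}=\Sym^2\oplus\Lambda_0^2\oplus\cn\omega$, with $\omega=\sum_{\alpha\beta}(J_n)_{\alpha\beta}\,e_\alpha\otimes e_\beta$ the invariant symplectic tensor, is a decomposition into pairwise non-isomorphic irreducibles, $T$ acts as scalars $s_1,s_2,s_3$ on the three summands. These are pinned down by three identities: $\trace(T)=\sum_a(\trace X_a)^2=0$ (because $\sp n\subset\su{2n}$); $\trace(\sigma\circ T)=\sum_a\trace(X_a^2)=-n(2n+1)$, where $\sigma$ is the flip $u\otimes v\mapsto v\otimes u$, which is $+1$ on $\Sym^2$ and $-1$ on $\Lambda^2$; and $T\omega=-c\,\omega$, which follows from the Lie-algebra relation $X_a^tJ_n=-J_nX_a$ (obtained by differentiating $q^tJ_nq=J_n$) via $X_aJ_nX_a^t=-X_a^2J_n$ together with $C=c\,I_{2n}$. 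Solving gives $s_1=-\tfrac12$ on $\Sym^2$, $s_2=\tfrac12$ on $\Lambda_0^2$ and $s_3=\tfrac{2n+1}{2}$ on $\cn\omega$. Writing the three projections in coordinates — symmetrisation, antisymmetrisation minus the rank-one piece $\tfrac1{2n}(J_n)_{p\ell}(J_n)_{\alpha\beta}$, and that rank-one piece itself — collapses the combination $s_1P_{\Sym^2}+s_2P_{\Lambda_0^2}+s_3P_{\cn\omega}$ to
$$S_{p\alpha,\ell\beta}=-\tfrac12\,\delta_{p\beta}\delta_{\ell\alpha}+\tfrac12\,(J_n)_{p\ell}(J_n)_{\alpha\beta}.$$
Contracting this against $q_{jp}q_{k\ell}$ and using $qJ_nq^t=J_n$ — immediate from $q^{-1}=q^*$ and $q^tJ_nq=J_n$ — produces $\kappa(q_{j\alpha},q_{k\beta})=-\tfrac12q_{k\alpha}q_{j\beta}+\tfrac12(J_n)_{jk}(J_n)_{\alpha\beta}$, as claimed.

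The $\tau$-computation is routine; the substance is in $\kappa$, and the one genuinely delicate point is the eigenvalue of $T$ on the line $\cn\omega$: this is the only place where the symplectic structure, as opposed to mere unitarity, enters, and it is exactly what forces the coefficient $\tfrac12$ of the $(J_n)_{jk}(J_n)_{\alpha\beta}$ term. A more elementary, if longer, alternative would avoid representation theory entirely: choose an explicit $g$-orthonormal basis of $\sp n$ via the block description $\begin{bmatrix}A&-\bar C\\C&-A^t\end{bmatrix}$ with $A$ skew-Hermitian and $C$ symmetric, and compute $\sum_a(X_a)_{p\alpha}(X_a)_{\ell\beta}$ directly by a case analysis on the basis elements.
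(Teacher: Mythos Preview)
Your argument is correct and takes a genuinely different route from the paper. The paper does exactly the ``more elementary, if longer, alternative'' you sketch at the end: it fixes an explicit block orthonormal basis $\mathcal B$ of $\sp n$, rewrites $\kappa(q_{j\alpha},q_{k\beta})=\bigl(q\{\sum_{Q\in\mathcal B}QE_{\alpha\beta}Q^t\}q^t\bigr)_{jk}$, and then evaluates the inner sum by a four-case analysis on whether $\alpha,\beta$ lie in $\{1,\dots,n\}$ or $\{n+1,\dots,2n\}$, using standard identities for $\sum X_{rs}E_{\alpha\beta}X_{rs}^t$, $\sum Y_{rs}E_{\alpha\beta}Y_{rs}^t$ and $\sum D_tE_{\alpha\beta}D_t^t$. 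Your approach replaces all of that with two invocations of Schur: the Casimir $C$ for $\tau$, and the equivariance of $T=\sum_a X_a\otimes X_a$ on $\cn^{2n}\otimes\cn^{2n}=\Sym^2\oplus\Lambda_0^2\oplus\cn\omega$ for $\kappa$. What you gain is brevity and conceptual clarity---the coefficients $-\tfrac12$ and $\tfrac12(J_n)_{jk}(J_n)_{\alpha\beta}$ are visibly coming from the flip $\sigma$ and from the symplectic invariant $\omega$, respectively---and the method transfers verbatim to the other classical groups. What the paper's approach gains is self-containedness: it needs nothing beyond matrix algebra, whereas yours relies on knowing that $\Sym^2$, $\Lambda_0^2$ and $\cn\omega$ are pairwise non-isomorphic $\Sp n$-irreducibles. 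Your computation of the three eigenvalues $s_1=-\tfrac12$, $s_2=\tfrac12$, $s_3=\tfrac{2n+1}{2}$ via $\trace T$, $\trace(\sigma T)$ and $T\omega=-C\,\omega$ is clean and checks out, and the assembly $-\tfrac12 P_{\Sym^2}+\tfrac12 P_{\Lambda^2}+nP_{\cn\omega}$ indeed collapses to $-\tfrac12\delta_{p\beta}\delta_{\ell\alpha}+\tfrac12(J_n)_{p\ell}(J_n)_{\alpha\beta}$ as you claim.
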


Here, we represent $\Sp n$ as the subgroup of the unitary group $\U{2n}$ consisting of elements of the form
\begin{equation*}
q = \begin{bmatrix}
z & w\\ -\bar w & \bar z
\end{bmatrix}.
\end{equation*}
Note therefore that
\begin{equation*}
q_{j\alpha}
=
\begin{cases}
z_{j\alpha} & \text{if }\, 1\leq j,\alpha \leq n\\
w_{j,\alpha-n} & \text{if }\, 1 \leq j \leq n \,\text{ and }\, n+1 \leq \alpha \leq 2n\\
-\bar{w}_{j-n,\alpha} & \text{if }\, n+1\leq j \leq 2n \,\text{ and }\, 1 \leq \alpha \leq n\\
\bar{z}_{j-n,\alpha-n} & \text{if }\, n+1 \leq j,\alpha \leq 2n.
\end{cases}
\end{equation*}
Thus, if we take $1 \leq j,k \leq n$, we see that $(J_n)_{jk} = 0$, and we recover all identities from Lemma 6.1 in \cite{Gud-Mon-Rat-1}. 
The new insight here is that there is an extra term when one mixes the coordinates and their conjugates in kappa.

For the proof of Lemma \ref{lemma-coordinate-spn}, we will also need the following identities, the proof of which can be found in Appendix A.1 of \cite{Sob-MSc} (also stated without proof in \cite{Gud-Sak-1}):
\begin{eqnarray}\label{eq-identities}
\sum_{1\leq r < s \leq n} X_{rs}E_{\alpha\beta}X_{rs}^t &=& \frac12 \delta_{\alpha\beta} I_n + (-1)^{\delta_{\alpha\beta}}E_{\beta\alpha}, \nonumber\\
\sum_{1\leq r < s \leq n} Y_{rs}E_{\alpha\beta}Y_{rs}^t &=& \frac12 \delta_{\alpha\beta} I_n - \frac12 E_{\beta\alpha}, \\
\sum_{t=1}^n D_tE_{\alpha\beta}D_t^t &=& \delta_{\alpha\beta}E_{\beta\alpha}. \nonumber
\end{eqnarray}

\begin{proof}[Proof of Lemma \ref{lemma-coordinate-spn}]
	The formula for $\tau$ follows from Lemma 6.1 in \cite{Gud-Mon-Rat-1} by complex linearity, so we only focus on the formula for $\kappa$.
	Let
	\begin{eqnarray*}
		\B &=& 
		\left\{
		\tfrac{1}{\sqrt 2}
		\begin{bmatrix}
			Y_{rs} & 0\\
			0 & Y_{rs}
		\end{bmatrix},
		\;
		\tfrac{1}{\sqrt 2}
		\begin{bmatrix}
			iX_{rs} & 0\\
			0 & -iX_{rs}
		\end{bmatrix},
		\;
		\tfrac{1}{\sqrt 2}
		\begin{bmatrix}
			iD_t & 0\\
			0 & -iD_t
		\end{bmatrix},
		\right.
		\\[0.1cm]
		&&
		\tfrac{1}{\sqrt 2}
		\begin{bmatrix}
			0 & X_{rs}\\
			-X_{rs} & 0
		\end{bmatrix},
		\;
		\tfrac{1}{\sqrt 2}
		\begin{bmatrix}
			0 & iX_{rs}\\
			iX_{rs} & 0
		\end{bmatrix},\\
		&&
		\left. 
		\tfrac{1}{\sqrt 2}
		\begin{bmatrix}
			0 & D_t\\
			-D_t & 0
		\end{bmatrix},
		\;
		\tfrac{1}{\sqrt 2}
		\begin{bmatrix}
			0 & iD_t\\
			iD_t & 0
		\end{bmatrix} \qquad\mid\;
		\begin{array}{c} 1 \leq r < s \leq n, \\ 1 \leq t \leq n \end{array}
		\right\}
	\end{eqnarray*}
	be the standard orthonormal basis for $\sp n$. Then
	\begin{eqnarray*}
	\kappa(q_{j\alpha},q_{k\beta})
	&=&
	\sum_{Q \in \B} (qQ)_{j\alpha}(qQ)_{k\beta}
	=
	\sum_{Q \in \B} (qQ)_{j\alpha}(Q^tq^t)_{\beta k}\\
	&=&
	\left(q \left\{\sum_{Q\in \B} QE_{\alpha\beta}Q^t \right\} q^t\right)_{jk}.
	\end{eqnarray*}
	Now to calculate the sum we consider four separate cases:
	\begin{enumerate}
		\item[(1)] $1\leq \alpha,\beta \leq n$ so that $E_{\alpha\beta} = \begin{bmatrix}
		E_{\alpha\beta} & 0\\
		0 & 0
		\end{bmatrix}$
		
		\vskip0.2cm
		
		\item[(2)] $1\leq \alpha \leq n$ and $n+1\leq \beta \leq 2n$ so that $E_{\alpha\beta} = \begin{bmatrix}
		0 & E_{\alpha,\beta-n}\\
		0 & 0
		\end{bmatrix}$
		
		\vskip0.2cm
		
		\item[(3)] $n+1\leq \alpha \leq 2n$ and $1\leq \beta \leq n$ so that $E_{\alpha\beta} = \begin{bmatrix}
		0 & 0\\
		E_{\alpha-n,\beta} & 0
		\end{bmatrix}$
		
		\vskip0.2cm
		
		\item[(4)] $1\leq \alpha,\beta \leq n$ so that $E_{\alpha\beta} = \begin{bmatrix}
		0 & 0\\
		0 & E_{\alpha-n,\beta-n}
		\end{bmatrix}$
	\end{enumerate}
	Here, we abuse notation slightly by letting $E_{\alpha\beta}$ denote both its $2n\times 2n$ and its $n \times n$ version, since it is clear from the context which is being used.

	For case (1), we have
	\begin{eqnarray*}
		& &\sum_{Q \in \B} QE_{\alpha\beta}Q^t\\
		&=&
		\begin{bmatrix}
			\tfrac12\sum_{r<s} (Y_{rs}E_{\alpha\beta}Y_{rs}^t - X_{rs}E_{\alpha\beta}X_{rs}^t) - \tfrac12 \sum_t D_t  & 0\\
			0 & 0
		\end{bmatrix}\\
		&+&
		\begin{bmatrix}
			0 & 0 \\
			0 & \tfrac12 \sum_{r<s} (X_{rs}E_{\alpha\beta}X_{rs}^t - X_{rs}E_{\alpha\beta}X_{rs}^t) + \tfrac12 \sum_t (D_tE_{\alpha\beta}D_t^t - D_tE_{\alpha\beta}D_t^t)
		\end{bmatrix}\\[0.2cm]
		&=&
		\begin{bmatrix}
			-\frac12 E_{\beta\alpha} & 0\\
			0 & 0
		\end{bmatrix}
		=
		-\frac12 E_{\beta\alpha}.
	\end{eqnarray*}
	It follows that, in case (1), we have
	\begin{equation*}
	\kappa(q_{j\alpha},q_{k\beta}) = -\frac{1}{2}\cdot q_{k\alpha}q_{j\beta},
	\end{equation*}
	which matches the claimed formula, since in this case $(J_n)_{\alpha\beta}=0$.
	
	For case (2), we get
	\begin{eqnarray*}
		&&\sum_{Q \in \B} QE_{\alpha\beta}Q^t\\
		&=&
		\begin{bmatrix}
			0 & \tfrac{1}{2}\sum_{r<s} (Y_{rs}E_{\alpha,\beta-n}Y_{rs}^t + X_{rs}E_{\alpha,\beta-n}X_{rs}^t) + \sum_t D_t E_{\alpha,\beta-n}D_t^t\\
			0 & 0
		\end{bmatrix}\\
		&+&
		\begin{bmatrix}
			0 & 0\\
			\tfrac12 \sum_{r<s} (-2X_{rs}E_{\alpha,\beta-n}X_{rs}^t) + \tfrac12 \sum_t (-2D_tE_{\alpha,\beta-n}D_t^t) & 0
		\end{bmatrix}.
	\end{eqnarray*}
	Now by the identities (\ref{eq-identities}),
	\begin{eqnarray*}
		&& \sum_{r<s} (Y_{rs}E_{\alpha,\beta-n}Y_{rs}^t + X_{rs}E_{\alpha,\beta-n}X_{rs}^t) + \sum_t D_tE_{\alpha,\beta-n}D_t^t\\
		&=&
		\delta_{\alpha,\beta-n} I_n + \left( \frac{(-1)^{\delta_{\alpha,\beta-n}}}{2} -  \frac12  + \delta_{\alpha,\beta-n} \right)E_{\beta,\alpha-n}\\
		&=&
		\delta_{\alpha,\beta-n} I_n
	\end{eqnarray*}
	since the quantity in the parentheses is 0 regardless of whether $\alpha = \beta-n$ or $\alpha\not= \beta-n$.
	Furthermore,
	\begin{eqnarray*}
		&& \sum_{r<s}X_{rs}E_{\alpha,\beta-n}X_{rs}^t + \sum_t D_tE_{\alpha,\beta-n}D_t^t\\
		&=&
		\frac{1}{2}\delta_{\alpha,\beta-n} I_n + \left( \frac{(-1)^{\delta_{\alpha,\beta-n}}}{2}  + \delta_{\alpha,\beta-n} \right) E_{\beta-n,\alpha}\\
		&=& \frac{1}{2}\delta_{\alpha,\beta-n} I_n + \frac{1}{2}E_{\beta-n,\alpha}
	\end{eqnarray*}
	since, as above, the quantity within the parentheses is always $1/2$.
	Thus
	\begin{eqnarray*}
		\sum_{Q \in \B} QE_{\alpha\beta}Q^t
		&=&
		\begin{bmatrix}
			0 & \tfrac{1}{2} \delta_{\alpha,\beta-n}I_n\\
			-\tfrac{1}{2} \delta_{\alpha,\beta-n}I_n - \tfrac12 E_{\beta-n,\alpha} & 0
		\end{bmatrix}\\
		&=&
		- \frac{1}{2}E_{\beta\alpha}
		+ \frac12 \delta_{\alpha,\beta-n}J_n.
	\end{eqnarray*}
	Now since $qJ_n q^t = J_n$ for each $q \in \Sp n$ we get
	\begin{equation*}
	\kappa(q_{j\alpha}, q_{k\beta}) = -\frac12 \cdot q_{k\alpha}q_{j\beta} + \frac12 \cdot \delta_{\alpha,\beta-n} \cdot (J_n)_{jk},
	\end{equation*}
	which matches the claimed formula (note that $\delta_{\alpha,\beta-n} = (J_n)_{\alpha\beta}$ in this case).
	
	In case (3) one can show using similar calculations as in case (2) that
	\begin{equation*}
	\kappa(q_{j\alpha}, q_{k\beta}) = -\frac12 \cdot q_{k\alpha}q_{j\beta} - \frac12 \cdot \delta_{\alpha-n,\beta} \cdot (J_n)_{jk}.
	\end{equation*}
	Case (4) can be treated in a similar way as case (1) to get
	\begin{equation*}
	\kappa(q_{j\alpha}, q_{k\beta}) = -\frac12 \cdot q_{k\alpha}q_{j\beta}.
	\end{equation*}
\end{proof}


\end{document}